\newtheorem{theorem}{Theorem}[section]
\newtheorem{proposition}[theorem]{Proposition}
\newtheorem{lemma}[theorem]{Lemma}
\newtheorem{corollary}[theorem]{Corollary}
\numberwithin{equation}{section}
\begin{document}
\baselineskip=15.5pt

\title[Transversely holomorphic branched Cartan geometry]{Transversely holomorphic 
branched Cartan geometry}

\author[I. Biswas]{Indranil Biswas}

\address{School of Mathematics, Tata Institute of Fundamental
Research, Homi Bhabha Road, Mumbai 400005, India}

\email{indranil@math.tifr.res.in}

\author[S. Dumitrescu]{Sorin Dumitrescu}

\address{Universit\'e C\^ote d'Azur, CNRS, LJAD, France}

\email{dumitres@unice.fr}

\subjclass[2010]{53C05, 53C12, 55R55}

\keywords{Holomorphic foliation, transverse structure, Cartan geometry}

\date{}

\begin{abstract}
In \cite{BD} we introduced and studied the concept of holomorphic {\it branched 
Cartan geometry}. We define here a foliated version of this notion; this is done in 
terms of Atiyah bundle. We show that any complex compact manifold of algebraic 
dimension $d$ admits, away from a closed analytic subset of positive codimension, a 
nonsingular holomorphic foliation of complex codimension $d$ endowed with a 
transversely flat branched complex projective geometry (equivalently, a ${\mathbb 
C}P^d$-geometry). We also prove that transversely branched holomorphic Cartan 
geometries on compact complex projective rationally connected varieties and on 
compact simply connected Calabi-Yau manifolds are always flat (consequently, they are 
defined by holomorphic maps into homogeneous spaces).
\end{abstract}

\maketitle

\section{Introduction}

In the recent article \cite{BD}, the authors introduced and studied the concept of {\it branched 
Cartan geometry} in the complex setting. This concept generalizes to higher dimension the notion 
of branched (flat) complex projective structure on a Riemann surface introduced and studied by 
Mandelbaum in \cite{M1, M2}. This new framework is much more flexible than that of the usual 
holomorphic Cartan geometries; for example, all compact complex projective manifolds admit 
branched holomorphic projective structures.

In this paper we deal with a foliated version of branched Cartan geometry. More 
precisely, we give a definition, in terms of Atiyah bundle, of a branched 
holomorphic Cartan geometry transverse to a holomorphic foliation. There is a 
natural curvature tensor which vanishes exactly when the transversely branched 
Cartan geometry is flat. When this happens, away from the branching divisor, the 
foliation is transversely modeled on a homogeneous space in the classical sense 
(see, for example, \cite{Mo}). The local coordinates with values in the 
homogeneous space extend through the branching divisor as a ramified holomorphic 
map (the branching divisor correspond to the ramification set). It should be
mentioned transversely holomorphic affine as well as projective structures
for (complex) codimension one foliations are studied extensively (see
\cite{Sc}, \cite{LPT}, \cite{CP} and references therein); such structures are
automatically flat.

In Section \ref{s3}, we use the formalism of Atiyah bundle, to deduce, in the flat 
case, the existence of a developing map which is a holomorphic map $\rho$ from the 
universal cover of the foliated manifold into the homogeneous space; the
differential $d \rho$ of $\rho$
surjective on an open dense set of the universal cover, and the foliation
on it is given by the kernel of $d 
\rho$. We also show that any complex compact manifold of algebraic dimension $d$ 
admits, away from a closed analytic subset of positive codimension, a nonsingular 
holomorphic foliation of complex codimension $d$, endowed with a transversely flat 
branched complex projective geometry (which is same as a ${\mathbb C}P^d$-geometry).

In Section \ref{s4} we use characteristic classes to prove a criterion for a 
holomorphic foliation to admit a branched transversely Cartan geometry. In 
particular, the criterion asserts that, on compact K\"ahler manifolds, foliations 
$\mathcal F$ with strictly negative conormal bundle do not admit any branched 
transversely holomorphic Cartan geometry whose model is the complex affine space 
(which is same as a holomorphic affine connections).

In Section \ref{special varieties} we consider holomorphic foliations $\mathcal F$ on two classes 
of special manifolds $\widehat X$: projective rationally connected manifolds, and simply connected 
Calabi-Yau manifolds. In both cases, we show that all transversely branched holomorphic Cartan 
geometries (on the open dense set $X$ of $\widehat X$ where the foliation is nonsingular) are 
necessarily flat and come from a holomorphic map into a homogeneous space with surjective 
differential at the general point.

\section{Foliation and transversely branched Cartan geometry} 

\subsection{Partial connection along a foliation}

Let $X$ be a connected complex manifold equipped with a
nonsingular holomorphic foliation $\mathcal F$;
so, $\mathcal F$ is a holomorphic subbundle of the holomorphic tangent bundle $TX$ such that the
sheaf of holomorphic sections of $\mathcal F$ is closed under the Lie bracket operation of vector
fields. Let
$$
{\mathcal N}_{\mathcal F}\, :=\, TX/{\mathcal F}\, \longrightarrow\, X
$$
be the normal bundle to the foliation. Let
\begin{equation}\label{q}
q\, :\, TX\, \longrightarrow\, N_{\mathcal F}
\end{equation}
be the quotient map. There is a natural flat holomorphic partial connection 
${\nabla}^{\mathcal F}$ on $N_{\mathcal F}$ in the direction of $\mathcal F$. We will briefly 
recall the construction of ${\nabla}^{\mathcal F}$. Given locally defined holomorphic 
sections $s$ and $t$ of $\mathcal F$ and ${\mathcal N}_{\mathcal F}$ respectively, choose a locally 
defined holomorphic section $\widetilde t$ of $TX$ that projects to $t$. Now define
$$
{\nabla}^{\mathcal F}_s t\,=\, q([s,\, {\widetilde t}])\, ,
$$
where $q$ is the projection in \eqref{q}. it is easy to see that this is 
independent of the choice of the lift $\widetilde t$ of $t$. Indeed, if $\widehat{t}$
is another lift of $t$, then $[s,\, {\widetilde t}-\widehat{t}]$ is a section of
${\mathcal F}$, because ${\widetilde t}-\widehat{t}$ is a section of
${\mathcal F}$. From the Jacobi 
identity for Lie bracket it follows that the curvature of ${\nabla}^{\mathcal F}$ 
vanishes identically.

We will define partial connections in a more general context.

Let $H$ be a complex Lie group. Its Lie algebra will be denoted by $\mathfrak h$. Let
\begin{equation}\label{g1}
p\, :\, E_H\,\longrightarrow\, X
\end{equation}
be a holomorphic principal $H$--bundle on $X$. This means that $E_H$ is a complex manifold
equipped with a holomorphic action
$$
p'\, :\, E_H\times H\, \longrightarrow\, E_H
$$
of $H$, and $p$ is a holomorphic surjective submersion, such that
\begin{itemize}
\item $p\circ p'\,=\, p\circ p_E$, where $p_E\, :\, E_H\times H\, \longrightarrow\, E_H$
is the natural projection, and

\item the map $p_E\times p'\, :\, E_H\times H\, \longrightarrow\, E_H\times_X E_H$ is an
isomorphism; note that the first condition ensures that the image of $p_E\times p'$ is
contained in $E_H\times_X E_H\, \subset\, E_H\times E_H$.
\end{itemize}

Let
\begin{equation}\label{dp}
\mathrm{d}p\, :\, TE_H\, \longrightarrow\, p^*TX
\end{equation}
be the differential of the map $p$ in \eqref{g1}. This homomorphism $\mathrm{d}p$ is surjective
because $p$ is a submersion. The kernel of $\mathrm{d}p$ is identified with the trivial vector
bundle $E_H\times {\mathfrak h}$ using the action of $H$ on $E_H$ (equivalently, by the
Maurer--Cartan form).
Consider the action of $H$ on $TE_H$ given by the action of $H$ on $E_H$.
It preserves the sub-bundle $\text{kernel}(\mathrm{d}p)$.
Define the quotient
$$
\text{ad}(E_H)\, :=\, \text{kernel}(\mathrm{d}p)/H\, \longrightarrow\, X\, .
$$
This $\text{ad}(E_H)$ is a holomorphic vector bundle over $X$. In fact, it is identified
with the vector bundle $E_H\times^H\mathfrak h$ associated to $E_H$ for the adjoint
action of $H$ on $\mathfrak h$; this identification is given by the
above identification of $\text{kernel}(\mathrm{d}p)$ with $E_H\times {\mathfrak h}$. This vector
bundle $\text{ad}(E_H)$ is known as the adjoint vector bundle for $E_H$. Since the
adjoint action of $H$ on $\mathfrak h$ preserves its Lie algebra structure, for any
$x\, \in\, X$, the fiber $\text{ad}(E_H)_x$ is a Lie algebra isomorphic to $\mathfrak h$. In
fact, $\text{ad}(E_H)_x$ is identified with $\mathfrak h$ uniquely up to a conjugation.

The direct image $p_*TE_H$ is equipped with an action of $H$ given by the action
of $H$ on $TE_H$. Note that $p_*TE_H$ is a locally free quasi-coherent analytic sheaf
on $X$. Its $H$--invariant part
$$
(p_*TE_H)^H \,\subset\, p_*TE_H
$$
is a locally free coherent analytic sheaf on $X$. The corresponding holomorphic vector bundle is denoted
by $\text{At}(E_H)$; it is known as the Atiyah bundle for $E_H$ \cite{At}. It is straight-forward
check that the quotient
$$
(TE_H)/H\, \longrightarrow\, X
$$
is identified with $\text{At}(E_H)$. Consider the short exact sequence of holomorphic vector
bundles on $E_H$
$$
0\, \longrightarrow\, \text{kernel}(\mathrm{d}p)\, \longrightarrow\,
\mathrm{T}E_H \, \stackrel{\mathrm{d}p}{\longrightarrow}\,p^*TX
\, \longrightarrow\, 0\, .
$$
Taking its quotient by $H$, we get the following short exact sequence of
vector bundles on $X$
\begin{equation}\label{at1}
0\, \longrightarrow\, \text{ad}(E_H)\, \stackrel{\iota''}{\longrightarrow}\,\text{At}(E_H)\,
\stackrel{\widehat{\mathrm{d}}p}{\longrightarrow}\, TX\, \longrightarrow\, 0\, ,
\end{equation}
where $\widehat{\mathrm{d}}p$ is constructed from $\mathrm{d}p$; this is known as
the Atiyah exact sequence for $E_H$. Now define the subbundle
\begin{equation}\label{atF}
\text{At}_{\mathcal F}(E_H)\, :=\, (\widehat{\mathrm{d}}p)^{-1}({\mathcal F})\,\subset\,
\text{At}(E_H)\, .
\end{equation}
So from \eqref{at1} we get the short exact sequence
\begin{equation}\label{at2}
0\, \longrightarrow\, \text{ad}(E_H)\, \longrightarrow\,\text{At}_{\mathcal F}(E_H)\,
\stackrel{\mathrm{d}'p}{\longrightarrow}\, {\mathcal F}\, \longrightarrow\, 0\, ,
\end{equation}
where $\mathrm{d}'p$ is the restriction of $\widehat{\mathrm{d}}p$
in \eqref{at1} to the subbundle $\text{At}_{\mathcal F}(E_H)$.

A partial holomorphic connection on $E_H$ in the direction of $\mathcal F$ is a holomorphic
homomorphism
$$
\theta\, :\, {\mathcal F}\, \longrightarrow\, \text{At}_{\mathcal F}(E_H)
$$
such that $\mathrm{d}'p\circ\theta\,=\, \text{Id}_{\mathcal F}$, where
$\mathrm{d}'p$ is the homomorphism in \eqref{at2}. Giving such a homomorphism
$\theta$ is equivalent to giving a homomorphism
$\varpi\, :\, \text{At}_{\mathcal F}(E_H)\, \longrightarrow\, \text{ad}(E_H)$ such that the
composition
$$
\text{ad}(E_H) \,\hookrightarrow\,
\text{At}_{\mathcal F}(E_H)\, \stackrel{\varpi}{\longrightarrow}\, \text{ad}(E_H)
$$
is the identity map of $\text{ad}(E_H)$, where the inclusion of $\text{ad}(E_H)$ in
$\text{At}_{\mathcal F}(E_H)$ is the injective homomorphism in \eqref{at2}. Indeed, the
homomorphisms $\varpi$ and $\theta$ uniquely determine each other by the condition that
the image of $\theta$ is the kernel of $\varpi$.

Given a partial connection $\theta\, :\, {\mathcal F}\, \longrightarrow\, \text{At}_{\mathcal F}(E_H)$,
and any two locally defined holomorphic sections $s_1$ and $s_2$ of $\mathcal F$, consider
the locally defined section $\varpi ([\theta(s_1),\, \theta(s_2)])$ of $\text{ad}(E_H)$ (since
$\theta(s_1)$ and $\theta(s_2)$ are $H$--invariant vector fields on $E_H$, the Lie bracket
$[\theta(s_1),\, \theta(s_2)]$ is also an $H$--invariant vector field). This defines an
${\mathcal O}_X$--linear homomorphism
$$
{\mathcal K}(\theta) \, \in\, H^0(X,\, \text{Hom}(\bigwedge\nolimits^2{\mathcal F},\, \text{ad}(E_H)))
\,=\, H^0(X,\, \text{ad}(E_H)\otimes \bigwedge\nolimits^2{\mathcal F}^*)\, ,
$$
which is called the \textit{curvature} of the connection $\theta$. The connection $\theta$ is
called flat if ${\mathcal K}(\theta)$ vanishes identically.

A partial connection on $E_H$ induces a partial connection on every bundle 
associated to $E_H$. In particular, a partial connection on $E_H$ induces a 
partial connection on the adjoint bundle $\text{ad}(E_H)$.

Since ${\rm At}_{\mathcal F}(E_H)$ is a subbundle of ${\rm At}(E_H)$, any partial connection
$\theta\, :\, {\mathcal F}\, \longrightarrow\, \text{At}_{\mathcal F}(E_H)$ produces a homomorphism
${\mathcal F}\, \longrightarrow\, \text{At}(E_H)$; this homomorphism will be denoted by
$\theta'$. Note that from \eqref{at1} we have an exact sequence
\begin{equation}\label{at3}
0\, \longrightarrow\, \text{ad}(E_H)\, \stackrel{\iota'}{\longrightarrow}\,
\text{At}(E_H)/\theta'({\mathcal F})\,
\stackrel{\widehat{\mathrm{d}}p}{\longrightarrow}\, TX/{\mathcal F}\,
=\, {\mathcal N}_{\mathcal F}\, \longrightarrow\, 0\, ,
\end{equation}
where $\iota'$ is given by $\iota''$ in \eqref{at1}.

\begin{lemma}\label{lem1}
Let $\theta$ be a flat partial connection on $E_H$. Then $\theta$ produces a flat partial
connection on ${\rm At}(E_H)/\theta'({\mathcal F})$ that satisfies the condition that the
homomorphisms in the exact sequence \eqref{at3} are connection preserving.
\end{lemma}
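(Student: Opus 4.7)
The strategy is to realize the partial connection on $V\, :=\, \text{At}(E_H)/\theta'(\mathcal F)$ through the Lie bracket of $H$--invariant vector fields on $E_H$. Since sections of $\text{At}(E_H)$ correspond to $H$--invariant vector fields on $E_H$, I would, for local sections $s$ of $\mathcal F$ and $\bar v$ of $V$, choose a local lift $v\in \text{At}(E_H)$ of $\bar v$ and set
$$
\widetilde\nabla_s \bar v\, :=\, [\theta(s),\, v] \bmod \theta'(\mathcal F)\, .
$$
This is the natural ansatz: restricted to sections of the subbundle $\text{ad}(E_H)\subset \text{At}(E_H)$ the right-hand side agrees with the standard partial connection on $\text{ad}(E_H)$ induced by $\theta$, because when $\xi$ is a vertical $H$--invariant vector field, $[\theta(s),\xi]$ remains vertical (it projects to $[s,0]=0$), so the bracket lands back in $\text{ad}(E_H)$.

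First I would verify well-definedness together with the partial connection axioms. Independence of the chosen lift $v$ reduces to showing $[\theta(s),\, \theta'(t)]\in \theta'(\mathcal F)$ for a section $t$ of $\mathcal F$, and this is exactly the identity $[\theta(s),\theta(t)]=\theta([s,t])$, which is the reformulation of the flatness $\mathcal K(\theta)=0$ once one notes that $[\theta(s),\theta(t)]$ projects to $[s,t]\in \mathcal F$ under $\mathrm d' p$. The $\mathcal O_X$--linearity $\widetilde\nabla_{fs}\bar v = f\widetilde\nabla_s\bar v$ holds only modulo $\theta'(\mathcal F)$, since the Leibniz correction term is a multiple of $\theta(s)\in \theta'(\mathcal F)$; the Leibniz rule $\widetilde\nabla_s(f\bar v)=s(f)\bar v + f\widetilde\nabla_s\bar v$ uses only that $\widehat{\mathrm d}p(\theta(s))=s$, i.e.\ that $\theta(s)$ acts on functions pulled back from $X$ as $p^*\circ s$.

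Flatness of $\widetilde\nabla$ is then a one-line consequence of the Jacobi identity for Lie brackets on $E_H$ combined with the same flatness identity $[\theta(s_1),\theta(s_2)]=\theta([s_1,s_2])$: the three-term curvature expression collapses to zero. Connection-preservation of $\iota'$ is immediate by the above identification on the $\text{ad}(E_H)$--part. Connection-preservation of $\widehat{\mathrm d}p$ reduces to the functoriality identity
$$
\widehat{\mathrm d}p([\theta(s),\, v])\, =\, [s,\, \widehat{\mathrm d}p(v)]\, ,
$$
which is the statement that $\widehat{\mathrm d}p$ is the anchor map of the Atiyah Lie algebroid; comparison with the defining formula $\nabla^{\mathcal F}_s w=q([s,\widetilde w])$ of the Bott partial connection on $\mathcal N_{\mathcal F}$ then finishes the compatibility.

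The only substantive input throughout is the flatness identity $[\theta(s_1),\theta(s_2)]=\theta([s_1,s_2])$, which enters both in the lift-independence of the formula and in the vanishing of the curvature. I do not anticipate a real obstacle beyond carefully keeping track of what is happening modulo $\theta'(\mathcal F)$; every remaining step is algebraic bookkeeping with $H$--invariant vector fields.
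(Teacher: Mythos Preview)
Your proposal is correct and is essentially the same argument as the paper's: the paper lifts $\theta(\mathcal F)$ to an $H$--invariant foliation $\widetilde F\subset TE_H$, takes the Bott partial connection on $TE_H/\widetilde F$, and descends by $H$ to ${\rm At}(E_H)/\theta'(\mathcal F)$; your formula $\widetilde\nabla_s\bar v=[\theta(s),v]\bmod\theta'(\mathcal F)$ is precisely that Bott connection written out on $H$--invariant sections, and your verifications of compatibility with $\iota'$ and $\widehat{\mathrm d}p$ match the paper's use of the fact that $[s',t]$ remains vertical when $t$ is vertical.
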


\begin{proof}
The image of $\theta$ defines an $H$--invariant holomorphic foliation on $E_H$; 
let
\begin{equation}\label{wtf}
{\widetilde F}\, \subset\, TE_H
\end{equation}
be this foliation. Note that the differential
$\mathrm{d}p$ in \eqref{dp} produces an isomorphism of ${\widetilde F}$ with
$p^*\mathcal F$. The natural connection on 
the normal bundle $TE_H/{\widetilde F}$ in the direction of ${\widetilde F}$ is 
evidently $H$--invariant (recall that $\text{At}(E_H)\,=\, (TE_H)/H$). On the other
hand, we have $(TE_H/{\widetilde F})/H\,=\, {\rm 
At}(E_H)/\theta'({\mathcal F})$. Therefore, the above connection on 
$TE_H/{\widetilde F}$ in the direction of ${\widetilde F}$ descends to a flat 
partial connection on ${\rm At}(E_H)/\theta'({\mathcal F})$ in the direction on 
$\mathcal F$.

Let $s$ be a holomorphic section of $\mathcal F$ defined on an open subset
$U\, \subset\, X$. Let $s'$ be the unique section of ${\widetilde F}$ over
$p^{-1}(U)\, \subset\, E_H$ such that $\mathrm{d}p(s')\,=\, s$. Let $t$ be a
holomorphic section of $\text{kernel}(\mathrm{d}p)\, \subset\, TE_H$ over
$p^{-1}(U)$. Then the Lie bracket $[s', \,t]$ has the property that 
$\mathrm{d}p([s', \,t])\,=\, 0$, meaning $[s', \,t]$ is a section of
$\text{kernel}(\mathrm{d}p)$. Since $\text{ad}(E_H)\,=\, \text{kernel}(\mathrm{d}p)
/H$, it now follows that the inclusion of $\text{ad}(E_H)$ in 
${\rm At}(E_H)/\theta'({\mathcal F})$ in \eqref{at3} preserves the partial connections
on $\text{ad}(E_H)$ and ${\rm At}(E_H)/\theta'({\mathcal F})$ 
in the direction of $\mathcal F$. Since $[s', \,t]$ is a section of
$\text{kernel}(\mathrm{d}p)$, it also follows that the projection
$\widehat{\mathrm{d}}p$ in \eqref{at3} is also partial connection preserving.
\end{proof}

\subsection{Transversely branched Cartan geometry}

Let $G$ be a connected complex Lie group and $H\, \subset\, G$ a complex Lie subgroup.
The Lie algebra of $G$ will be denoted by $\mathfrak g$. As in \eqref{g1}, $E_H$
is a holomorphic principal $H$--bundle on $X$. Let
\begin{equation}\label{eg}
E_G\,=\, E_H\times^H G\,\longrightarrow\, X
\end{equation}
be the principal $G$--bundle on $X$ obtained by extending the structure group of $E_H$ using the 
inclusion of $H$ in $G$. The inclusion of $\mathfrak h$ in $\mathfrak g$ produces a fiber-wise
injective homomorphism of Lie algebras
\begin{equation}\label{i1}
\iota\, :\, \text{ad}(E_H)\,\longrightarrow\,\text{ad}(E_G)\, ,
\end{equation}
where $\text{ad}(E_G)\,=\, E_G\times^G{\mathfrak g}$ is the adjoint bundle for $E_G$.
Let $\theta$ be a flat partial connection on $E_H$ in the direction of 
$\mathcal F$. So $\theta$ induces flat partial connections on the associated bundles $E_G$, 
$\text{ad}(E_H)$ and $\text{ad}(E_G)$.

A transversely branched holomorphic Cartan geometry of type $(G,\, H)$
on the foliated manifold $(X,\, {\mathcal F})$ is
\begin{itemize}
\item a holomorphic principal $H$--bundle $E_H$ on $X$ equipped with a flat partial
connection $\theta$, and

\item a holomorphic homomorphism
\begin{equation}\label{beta}
\beta\,:\, \text{At}(E_H)/\theta'({\mathcal F})\, \longrightarrow\,
\text{ad}(E_G)\, ,
\end{equation}
\end{itemize}
such that the following three conditions hold:
\begin{enumerate}
\item $\beta$ is partial connection preserving,

\item $\beta$ is an isomorphism over a nonempty open subset of $X$, and

\item the following diagram is commutative:
\begin{equation}\label{cg1}
\begin{matrix}
0 &\longrightarrow & \text{ad}(E_H) &\stackrel{\iota'}{\longrightarrow} &
\text{At}(E_H)/\theta'({\mathcal F}) &
\longrightarrow & {\mathcal N}_{\mathcal F} &\longrightarrow & 0\\
&& \Vert &&~ \Big\downarrow\beta && ~ \Big\downarrow\overline{\beta}\\
0 &\longrightarrow & \text{ad}(E_H) &\stackrel{\iota}{\longrightarrow} &
\text{ad}(E_G) &\longrightarrow &
\text{ad}(E_G)/\text{ad}(E_H) &\longrightarrow & 0
\end{matrix}
\end{equation}
\end{enumerate}
where the top exact sequence is the one in \eqref{at3}, and $\iota$ is the homomorphism
in \eqref{i1}.

From the commutativity of \eqref{cg1} it follows immediately that the homomorphism 
$\overline{\beta} \,:\, {\mathcal N}_{\mathcal F}\,\longrightarrow\, 
\text{ad}(E_G)/\text{ad}(E_H)$ in \eqref{cg1} is an isomorphism 
over a point $x\, \in\, X$ if and only if $\beta(x)$ is an isomorphism.

Let $n$ be the complex dimension of $\mathfrak g$. Consider the homomorphism of $n$-th
exterior products
$$
\bigwedge\nolimits^n\beta\, :\, \bigwedge\nolimits^n(\text{At}(E_H)/\theta'({\mathcal F}))
\, \longrightarrow\, \bigwedge\nolimits^n\text{ad}(E_G)
$$
induced by $\beta$. The homomorphism $\beta$ fails to be an isomorphism precisely over the
divisor of the section $\bigwedge\nolimits^n\beta$ of the line bundle
$\text{Hom}(\bigwedge\nolimits^n(\text{At}(E_H)/\theta'({\mathcal F})),
\,\bigwedge\nolimits^n\text{ad}(E_G))$. This divisor $\text{div}(\bigwedge\nolimits^n\beta)$
will be called the \textit{branching divisor} for $((E_H,\, \theta),\, \beta)$.
We will call $((E_H,\, \theta),\, \beta)$ a holomorphic
Cartan geometry if $\beta$ is an isomorphism over $X$.

Take a holomorphic principal $H$--bundle $E_H$ on $X$ equipped with a flat partial
connection $\theta$ in the direction of
$\mathcal F$. Giving a homomorphism $\beta$ as in \eqref{beta} satisfying the
above conditions is equivalent to giving a holomorphic $\mathfrak g$--valued one--form
$\omega$ on $E_H$ satisfying the following conditions:
\begin{enumerate}
\item $\omega$ is $H$--equivariant for the adjoint action of $H$ on $\mathfrak g$,

\item $\omega$ vanishes on the foliation ${\widetilde F}\, \subset\, TE_H$ in
\eqref{wtf} given by the image of $\theta$,

\item the resulting homomorphism $\omega\, :\, (TE_H)/{\widetilde F}
\, \longrightarrow\, E_H\times
{\mathfrak g}$ is an isomorphism over a nonempty open subset of $E_H$, and

\item the restriction of $\omega$ to any fiber of $p$ (see \eqref{g1}) coincides with the
Maurer--Cartan form for the action of $H$ on the fiber.
\end{enumerate}

To see that the two descriptions of a transversely branched holomorphic Cartan 
geometry are equivalent, first recall
that $p^* \text{At}(E_H)\,=\, TE_H$, and the pullback of $p^*\text{ad}(E_G)$ is
identified with the trivial vector bundle $E_H\times{\mathfrak g}\, \longrightarrow\,
E_H$. Given a homomorphism $\beta\,:\, \text{At}(E_H)/\theta'({\mathcal F})\,
\longrightarrow\, \text{ad}(E_G)$ satisfying the above conditions, the composition
$$
TE_H\,=\, p^* \text{At}(E_H)\,\longrightarrow\,p^*(\text{At}(E_H)/\theta'({\mathcal F}))
\,\stackrel{p^*\beta}{\longrightarrow}\, p^*\text{ad}(E_G)\,=\,
E_H\times {\mathfrak g}
$$
defines a holomorphic $\mathfrak g$--valued one--form
$\omega$ on $E_H$ that satisfies the above conditions.
Conversely, any holomorphic $\mathfrak g$--valued one--form
$\omega$ on $E_H$ that satisfying the above conditions, produces a homomorphism
$$
(TE_H)/{\widetilde F} \, \longrightarrow\, E_H\times\mathfrak g
$$
because it vanishes on $\widetilde F$. This homomorphism is $H$--equivariant, so
descends to a homomorphism
$$
\text{At}(E_H)/\theta'({\mathcal F})\,=\, 
((TE_H)/{\widetilde F})/H\, \longrightarrow\, (E_H\times\mathfrak g)/H\,=\,
\text{ad}(E_G)
$$
over $X$. This descended homomorphism satisfies the conditions needed to
define a transversely branched holomorphic Cartan
geometry.

If $\mathcal F$ is the trivial foliation (by points) then the previous definition 
is exactly that of a branched Cartan geometry on $X$, as given in \cite{BD}.

\section{Connection and developing map}\label{s3}

\subsection{Holomorphic connection on $E_G$}

Let $((E_H,\, \theta),\, \beta)$ be a transversely branched Cartan geometry of type 
$(G,\, H)$ on the foliated manifold $(X,\, {\mathcal F})$. We will show that this 
data produces a holomorphic connection on the principal $G$--bundle $E_G$ defined in 
\eqref{eg}.

Consider the homomorphism
\begin{equation}\label{eh}
\text{ad}(E_H)\,\longrightarrow\, {\rm ad}(E_G)
\oplus \text{At}(E_H)\, , \ \ v \, \longmapsto\,
(\iota(v),\, -\iota''(v))
\end{equation}
(see \eqref{i1} and \eqref{at1} for $\iota$ and $\iota''$ respectively).
The corresponding quotient $({\rm ad}(E_G)\oplus \text{At}(E_H))/\text{ad}(E_H)$
is identified with the Atiyah bundle
${\rm At}(E_G)$. The inclusion of $\text{ad}(E_G)$ in ${\rm At}(E_G)$ as in
\eqref{at1} is given by the inclusion $\text{ad}(E_G) \, \hookrightarrow\,
{\rm ad}(E_G)\oplus \text{At}(E_H)$, $w\, \longmapsto\, (w,\, 0)$, while the projection
${\rm At}(E_G)\, \longrightarrow\, TX$ is given by the
composition
$$
{\rm At}(E_G)\, \hookrightarrow\, {\rm ad}(E_G)
\oplus \text{At}(E_H) \,\stackrel{(0,\widehat{\mathrm{d}}p)}{\longrightarrow}\, TX\, ,
$$
where $\widehat{\mathrm{d}}p$ is the projection in \eqref{at1}.

Consider the subbundle $\theta'({\mathcal F})\, \subset\, \text{At}(E_H)$ in \eqref{at3}.
The composition
$$
{\rm At}(E_H)\, \longrightarrow\, \text{At}(E_H)/\theta'({\mathcal F})
\,\stackrel{\beta}{\longrightarrow}\, \text{ad}(E_G)\, ,
$$
where the first homomorphism is the quotient map, will be denoted by $\beta'$. The homomorphism
\begin{equation}\label{hv}
{\rm ad}(E_G)\oplus \text{At}(E_H)\, \longrightarrow\, {\rm ad}(E_G)\, , \ \
(v,\, w) \, \longmapsto\, v+\beta'(w)
\end{equation}
vanishes on the image of $\text{ad}(E_H)$ by the map in \eqref{eh}. Therefore,
the homomorphism in \eqref{hv} produces a homomorphism
\begin{equation}\label{vp}
\varphi\, :\, \text{At}(E_G)\,=\, ({\rm ad}(E_G)\oplus \text{At}(E_H))/\text{ad}(E_H)
\,\longrightarrow\, \text{ad}(E_G)\, .
\end{equation}
The composition
$$
\text{ad}(E_G)\,\hookrightarrow\, \text{At}(E_G)\, \stackrel{\varphi}{\longrightarrow}\,\text{ad}(E_G)
$$
clearly coincides with the identity map of $\text{ad}(E_G)$. Hence $\varphi$ defines a holomorphic
connection on the principal $G$--bundle $E_G$ \cite{At}. Note that $\theta$ is not
directly used in the construction of the homomorphism $\varphi$. Let
$$
{\rm Curv}(\varphi)\, \in\, H^0(X,\, \text{ad}(E_G)\otimes\Omega^2_X)
$$
be the curvature of the connection $\varphi$.

\begin{lemma}\label{lem2}
The curvature ${\rm Curv}(\varphi)$ lies in the image of the homomorphism
$$
H^0(X,\, {\rm ad}(E_G)\otimes\bigwedge\nolimits^2 {\mathcal N}^*_{\mathcal F})
\, \hookrightarrow\, H^0(X,\, {\rm ad}(E_G)\otimes\Omega^2_X)
$$
given by the inclusion $q^*\, :\, {\mathcal N}^*_{\mathcal F}\, \hookrightarrow\,
\Omega^1_X$ (the dual of the projection in \eqref{q}).
\end{lemma}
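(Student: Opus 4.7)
The plan is to use the equivalent description of the transversely branched Cartan geometry in terms of the $\mathfrak g$-valued $1$-form $\omega$ on $E_H$ recalled at the end of Section~2.2. The connection $\varphi$ on $E_G$ pulls back along the canonical embedding $E_H\hookrightarrow E_G$, $e\mapsto[e,1]$, to the form $\omega$, so (after trivializing the pulled-back adjoint bundle as $E_H\times\mathfrak g$) the pullback of the curvature of $\varphi$ to $E_H$ is the standard Cartan structure expression
$$
\Omega\,=\,d\omega+\tfrac12[\omega,\omega].
$$
Moreover, $\mathrm{Curv}(\varphi)$ lies in $\mathrm{ad}(E_G)\otimes\bigwedge^2{\mathcal N}^*_{\mathcal F}$ if and only if $\iota_{\widetilde v}\Omega\,=\,0$ for every section $\widetilde v$ of the foliation $\widetilde F\,\subset\,TE_H$ defined in \eqref{wtf}, since $\mathrm{d}p$ identifies $\widetilde F$ with $p^*{\mathcal F}$.

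First I would use condition (2) on $\omega$, namely $\omega(\widetilde v)\,=\,0$, together with the identity $\iota_{\widetilde v}[\omega,\omega]\,=\,2[\omega(\widetilde v),\omega]$, to collapse the structure equation to
$$
\iota_{\widetilde v}\Omega\,=\,\iota_{\widetilde v}\,d\omega\,=\,L_{\widetilde v}\omega-d(\omega(\widetilde v))\,=\,L_{\widetilde v}\omega.
$$
Thus the task reduces to proving the single identity $L_{\widetilde v}\omega\,=\,0$ as a $1$-form on $E_H$.

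Next I would verify $L_{\widetilde v}\omega\,=\,0$ by testing on two classes of local vector fields that together generate $TE_H$. For a fundamental vector field $\xi^*$ coming from $\xi\in\mathfrak h$, the $H$-invariance of $\widetilde v$ yields $[\widetilde v,\xi^*]\,=\,0$, while the Maurer--Cartan condition (4) gives $\omega(\xi^*)\,=\,\xi$, a constant $\mathfrak g$-valued function on $E_H$; hence $(L_{\widetilde v}\omega)(\xi^*)\,=\,\widetilde v(\omega(\xi^*))-\omega([\widetilde v,\xi^*])\,=\,0$. For an $H$-invariant lift $\widetilde u$ of a local vector field on $X$, I would invoke the partial-connection-preserving condition (1) on $\beta$: by Lemma~\ref{lem1}, the partial connection on $\mathrm{At}(E_H)/\theta'({\mathcal F})$ sends $[\widetilde u]$ to the class of $[\widetilde v,\widetilde u]$ modulo $\widetilde F$, while a local section of $\mathrm{ad}(E_G)$ corresponds to an $H$-equivariant function $E_H\to\mathfrak g$ and the induced partial connection on $\mathrm{ad}(E_G)$ differentiates such a function along $\widetilde v$. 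Since $\omega(\widetilde u)$ is precisely the $H$-equivariant function representing $\beta([\widetilde u])$, condition (1) translates to $\omega([\widetilde v,\widetilde u])\,=\,\widetilde v(\omega(\widetilde u))$, i.e.\ $(L_{\widetilde v}\omega)(\widetilde u)\,=\,0$.

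The main obstacle will be carefully unwinding the identifications in this last step so that condition (1) really reduces to the scalar Leibniz-type identity $\widetilde v(\omega(\widetilde u))\,=\,\omega([\widetilde v,\widetilde u])$; the remaining steps are immediate from Cartan's magic formula and the structure equation. Once $L_{\widetilde v}\omega\,=\,0$ has been established on the two generating classes, the lemma follows.
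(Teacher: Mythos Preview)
Your argument is correct. The paper's proof is very terse: it observes that since $\beta$ is partial-connection preserving, the restriction of $\varphi$ in the direction of $\mathcal F$ coincides with the flat partial connection $\widetilde\theta$ on $E_G$ induced by $\theta$, and then simply asserts that, again because $\beta$ preserves the partial connections, the contraction of ${\rm Curv}(\varphi)$ with any vector in $\mathcal F$ vanishes. No further detail is given.

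Your proof is essentially an explicit unpacking of that last assertion, carried out upstairs on $E_H$ in terms of the form $\omega$ rather than downstairs on $X$ in the Atiyah-bundle language. The key step in both is the same: condition~(1) (that $\beta$ intertwines the partial connections of Lemma~\ref{lem1}) is exactly what forces the mixed contractions to vanish, and your identity $\widetilde v(\omega(\widetilde u))=\omega([\widetilde v,\widetilde u])$ is the concrete translation of that condition. What your route buys is that the roles of conditions (1), (2) and (4) on $\omega$ become individually visible, and the reduction via Cartan's formula to $L_{\widetilde v}\omega=0$ is clean; what the paper's route buys is brevity, at the cost of leaving the verification to the reader. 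One small point worth making explicit in your write-up: you should take $\widetilde v$ to be the $H$-invariant lift of a section of $\mathcal F$ (rather than an arbitrary section of $\widetilde F$), since you use $[\widetilde v,\xi^*]=0$; this is harmless because $\Omega$ is tensorial.
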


\begin{proof}
Let $\widetilde\theta$ be the partial connection on $E_G$ induced by the partial 
connection $\theta$ on $E_H$. Note that $\widetilde\theta$ is flat because 
$\theta$ is flat. Since the homomorphism $\beta$ in \eqref{beta} is partial connection 
preserving, it follows that the restriction of the connection $\varphi$ in the 
direction of $\mathcal F$ coincides with $\widetilde\theta$. Hence the restriction 
of $\varphi$ to $\mathcal F$ is flat.

In fact, since $\beta$ is connection preserving, the contraction of ${\rm Curv}( 
\varphi)$ by any tangent vector of $TX$ lying in $\mathcal F$ vanishes. This 
implies that ${\rm Curv}(\varphi)$ is actually a section of ${\rm ad}(E_G)\otimes 
\bigwedge\nolimits^2 {\mathcal N}^*_{\mathcal F}$.
\end{proof}

The transversely branched Cartan geometry $((E_H,\, \theta),\, \beta)$ will be 
called \textit{flat} if the curvature ${\rm Curv}(\varphi)$ vanishes identically.

\subsection{The developing map}\label{developing}

Assume that $((E_H,\, \theta),\, \beta)$ is flat and $X$ is simply connected. Fix a point
$x_0\,\in\, X$ and a point $z_0\in\, (E_H)_{x_0}$ in the fiber of $E_H$ over $x_0$. Using the
flat connection $\varphi$ on $E_G$ and the trivialization of $(E_G)_{x_0}$ given by $z_0$,
the principal $G$--bundle $E_G$ gets identified with $X\times G$. Using this identification, the
inclusion of $E_H$ in $E_G$ produces a holomorphic map
\begin{equation}\label{rho}
\rho\, :\, X\, \longrightarrow \, G/H\, .
\end{equation}
If the base point $z_0$ is replaced by $z_0h\,\in\, (E_H)_{x_0}$, where $h\,\in\, H$, then
the map $\rho$ in \eqref{rho} gets replaced by the composition
$$
X\, \stackrel{\rho}{\longrightarrow} \, G/H\, \stackrel{y\mapsto hy}{\longrightarrow} \,G/H\, .
$$
The map $\rho$ will be called a developing map for $((E_H,\, \theta),\, \beta)$.

The differential of $\rho$ is surjective outside the branching divisor for $((E_H,\, 
\theta),\, \beta)$. Indeed, the differential $d\rho\, :\, TX\, \longrightarrow \, 
\rho^* T(G/H)$ of $\rho$ is given by the homomorphism $\overline{\beta}$ in 
\eqref{cg1}. It was noted earlier that $\overline{\beta}$ fails to be an 
isomorphism exactly over the branching divisor for $((E_H,\, \theta),\, \beta)$.

Note that $\rho$ is a constant map when restricted to a connected component of a 
leaf for $\mathcal F$, because the connection $\varphi$ restricted to such a 
connected component is induced by a connection on $E_H$ (it is induced by the 
partial connection $\theta$ on $E_H$). In particular, $\rho$ is a constant map if 
there is a dense leaf for $\mathcal F$. In that case, ${\rm rank}({\mathcal 
N}_{\mathcal F}) \,=\, \dim \mathfrak g - \dim \mathfrak h \,=\, 0$, so $X$ the
unique leaf.

If $X$ is not simply connected, fix a base point $x_0\, \in\, X$, and let $\psi\, 
:\, \widetilde{X}\, \longrightarrow\, X$ be the corresponding universal cover. 
Considers the pull-back $\widetilde{\mathcal F}$ of the foliation $\mathcal F$, as 
well as the pull-back of the transversely branched flat Cartan geometry $((E_H,\, 
\theta),\, \beta)$, to $\widetilde{X}$ using $\psi$. Then the developing map of 
the transversely flat Cartan geometry on $(\widetilde{X},\, \widetilde{\mathcal 
F})$ is a holomorphic map $\rho \,:\, \widetilde{X}\,\longrightarrow\, G/H$ (as 
before, we need to fix a point in $(\psi^*E_H)_{x'_0}$, where $x'_0\, \in\, 
\widetilde{X}$ is the base point), which is a submersion away from the inverse 
image, under $\psi$, of the branching divisor). Moreover, the monodromy of the 
flat connection on $E_G$ produces a group homomorphism (called monodromy 
homomorphism) from the fundamental group $\pi_1(X, x_0)$ of $X$ into $G$, and 
$\rho$ must be equivariant with respect to the action of $\pi_1(X, x_0)$ by 
deck-transformation on $\widetilde{X}$ and through the image of the monodromy 
morphism on $G/H$. The reader will find more details about this construction in 
\cite{Mo}.

\subsection{Fibrations over a homogeneous space}\label{fibration hom} 

The standard (flat) Cartan geometry on the homogeneous space $X=G/H$ is given by 
the following tautological construction.

Let $F_H$ be the holomorphic principal $H$--bundle on $X$ defined by the quotient 
map $G\, \longrightarrow\, G/H$ (we use the notation $F_H$ instead of $E_H$ 
because it is a special case which will play a role later). Identify the Lie 
algebra $\mathfrak g$ with the Lie algebra of right--invariant vector fields on 
$G$. This produces an isomorphism
\begin{equation}\label{bgh}
\beta_{G,H}\, :\, \text{At}(F_H)\, \longrightarrow\, \text{ad}(F_G)
\end{equation}
and hence a Cartan geometry of type $G/H$ on $X$ (the foliation on $G/H$ (leaves
are points) is trivial and there is no branching divisor).

The principal $G$--bundle
$$F_G\, :=\, F_H\times^H G\, \longrightarrow\, X\,=\, G/H\, ,$$ obtained by extending the
structure group of $E_H$ using the inclusion of $H$ in $G$, is canonically identified with
the trivial principal $G$--bundle $X\times G$. To see this, consider the map
\begin{equation}\label{mapp}
G\times G\, \longrightarrow\, G\times G\, , \ \ (g_1,\, g_2)\, \longmapsto\, (g_1,\, g_1g_2)\, .
\end{equation}
Note that $E_G$ is the quotient of $G\times G$ where any $(g_1h,\, g_2)$ is identified with
$(g_1,\, g_2)$, where $g_1,\, g_2\,\in\, G$ and $h\, \in\, H$. Therefore, the map in
\eqref{mapp} produces an isomorphism of $E_G$ with $X\times G$. The connection on $F_G$
given by the above Cartan geometry of type $G/H$ on $X\,=\, G/H$ is the trivial connection on
$X\times G$. In particular, the Cartan geometry of type $G/H$ on $X$ is flat.

The above holomorphic $\mathfrak g$--valued $1$--form on $G\,=\, F_H$ will be 
denoted by $\beta_{G,H}$.

Let $X$ be a connected complex manifold and
$$
\gamma\, :\, X\, \longrightarrow\, G/H
$$
a holomorphic map such that the differential
$$
d\gamma\, :\, TX\, \longrightarrow\, T(G/H)
$$
is surjective over a nonempty subset of $X$.

Consider the foliation on $X$ given by the kernel of $d \gamma$. It is a singular 
holomorphic foliation, which is regular on the dense open set of $X$ where the 
homomorphism $d \gamma$ is surjective. It extends to a regular holomorphic 
foliation
\begin{equation}\label{exfo}
{\mathcal F}\, \subset\, TX'
\end{equation}
on an open subset $X'$ of $X$ of complex codimension at
least two (containing the open set where $d \gamma$ is surjective).

Set $E_H$ to be the pullback $\gamma^*F_H$. 

Note that we have a holomorphic
map $\eta\, :\, E_H\, \longrightarrow\, F_H$ which is $H$--equivariant and
fits in the commutative diagram
$$
\begin{matrix}
E_H & \stackrel{\eta}{\longrightarrow} & F_H\\
\Big\downarrow && \Big\downarrow\\
X & \stackrel{\gamma}{\longrightarrow} & G/H
\end{matrix}
$$

Notice that, by construction, the $H$-bundle $E_H$ is trivial along the leaves of 
$\mathcal F$ and hence it inherits a flat partial connection $\theta$ along the 
leafs of the foliation $\mathcal F$ constructed in \eqref{exfo}. Let
\begin{equation}\label{thp}
\theta'\, :\, {\mathcal F}\, \longrightarrow\, \text{At}(E_H)\,=\, \text{At}(\gamma^*F_H)
\end{equation}
be the homomorphism giving this partial connection.

We will show that $(E_H,\, \eta^*\beta_{G,H})$ defines a transversely holomorphic 
branched flat Cartan geometry of type $G/H$ on the foliated manifold $(X',\, 
{\mathcal F})$, where $X'\, \subset\, X$ is the dense open subset introduced 
earlier. It is branched over points $x \,\in\, X'$ where $d \gamma (x)$ is not 
surjective.

To describe the above branched Cartan geometry in terms of the Atiyah bundle, first note
that $\text{At}(E_H)=\text{At}(\gamma^* F_H)$ coincides with the subbundle of the vector bundle
$\gamma^*\text{At}(F_H) \oplus TX$ given by the kernel of the homomorphism
$$
\gamma^*\text{At}(F_H)\oplus TX\, \longrightarrow\, \gamma^*T(G/H)\, ,\ \
(v,\, w)\, \longmapsto\, \gamma^*p_{G,H}(v') -d\gamma(w)\, ,
$$
where $p_{G,H}\, :\, \text{At}(F_H)\, \longrightarrow\, T(G/H)$ is the natural projection
(see \eqref{at1}), while $v'$ is the image of $v$ under the natural map
$\gamma^*\text{At}(F_H)\,\longrightarrow\, \text{At}(F_H)$,
and $$d\gamma\,:\, TX\,\longrightarrow\, \gamma^* T(G/H)$$ is the
differential of $\gamma$. 

Notice that the restriction of the homomorphism
$$
\gamma^*\text{At}(F_H)\oplus TX\, \longrightarrow\, \gamma^*\text{ad}(F_G)\, ,
\ \ (a,\, b)\, \longmapsto\, \gamma^*\beta_{G,H} (a)
$$
(see \eqref{bgh} for $\beta_{G,H}$)
to $\text{At}(\gamma^*F_H)\, \subset\, \gamma^*\text{At}(F_H)\oplus TX$ is a homomorphism
$$
\text{At}(\gamma^*F_H)\, \longrightarrow\, \text{ad}(\gamma^*F_G)\,=\, \gamma^*\text{ad}(F_G)
\,=\, \text{ad}(E_G)\, ,
$$
which vanishes on $\theta'(\mathcal F)$, where $\theta'$ is constructed in \eqref{thp}.

It defines a transversely branched holomorphic Cartan geometry of type $G/H$ on 
$(X', \mathcal F)$.

The divisor of $X'$ over which the above branched transversely Cartan geometry of type $G/H$ on 
$X'$ fails to be a Cartan geometry coincides with the divisor over which the 
differential $d\gamma$ fails to be surjective.

It was observed earlier that the model Cartan geometry defined by $\beta_{G,H}$ in \eqref{bgh} 
is flat. As a consequence of it, the above branched transversely Cartan geometry of type $G/H$ 
on $X'$ is flat.

The developing map for this flat branched Cartan geometry on $X'$, is the map 
$\gamma$ itself restricted to $X'$.

The following proposition is proved similarly.

\begin{proposition}\label{fibration}
Let $X$ be connected complex manifold, and let $M$ be a complex manifold endowed with a 
holomorphic Cartan geometry of type $(G,H)$. Suppose that there exists a holomorphic map $f\,:\, 
X \,\longrightarrow\, M$ such that the differential $df$ is surjective on an open dense subset of
$X$. Then the kernel of $df$ defines a holomorphic foliation $\mathcal F$ on an open dense subset $X'$ 
of $X$ of complex codimension at least two. Moreover $\mathcal F$ admits a transversely branched 
holomorphic Cartan geometry of type $(G,H)$, which is flat if and only if the Cartan geometry on 
$M$ is flat.
\end{proposition}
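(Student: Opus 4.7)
\emph{Proof proposal.} The plan is to imitate, almost verbatim, the construction carried out above for a holomorphic map $\gamma : X \to G/H$, replacing the tautological Cartan geometry $(F_H, \beta_{G,H})$ on $G/H$ by the given Cartan geometry on $M$, presented in Atiyah-bundle form as a principal $H$-bundle $P_H \to M$ together with an isomorphism $\beta_M : \text{At}(P_H) \to \text{ad}(P_G)$, where $P_G := P_H \times^H G$. Set $E_H := f^*P_H$; then $E_G = E_H \times^H G$ is canonically identified with $f^*P_G$, and there is a canonical $H$-equivariant lift $\eta : E_H \to P_H$ of $f$.

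First I would construct the foliation $\mathcal F$. On the open dense set $X_0 \subset X$ where $df$ attains its maximal rank $\dim M$, the subsheaf $\ker(df) \subset TX$ is a holomorphic subbundle, and it is involutive because the kernel of the differential of any submersion is closed under the Lie bracket. Its saturation inside $TX$ is a reflexive coherent subsheaf, hence locally free away from an analytic subset of complex codimension at least three; discarding this subset from $X$ gives the desired open subset $X' \supset X_0$ whose complement in $X$ has complex codimension at least two, and on which $\mathcal F$ is a regular holomorphic foliation (integrability extends from $X_0$ by continuity).

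Next I would produce the branched transverse Cartan geometry on $(X', \mathcal F)$. Because $f$ is constant on the leaves of $\mathcal F$, the bundle $E_H$ is canonically trivialized along each leaf, and this trivialization yields the flat partial connection $\theta : \mathcal F \to \text{At}_{\mathcal F}(E_H)$. For the homomorphism $\beta$, I would use the identification of $\text{At}(E_H) = \text{At}(f^*P_H)$ with the kernel of
$$
f^*\text{At}(P_H) \oplus TX' \longrightarrow f^*TM, \qquad (v, w) \longmapsto p_M(v) - df(w),
$$
where $p_M : \text{At}(P_H) \to TM$ is the Atiyah projection, exactly as in the $M = G/H$ case. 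The restriction of the map $(v, w) \mapsto f^*\beta_M(v)$ to this kernel yields a homomorphism $\text{At}(E_H) \to f^*\text{ad}(P_G) = \text{ad}(E_G)$ that vanishes on $\theta'(\mathcal F)$, and hence descends to $\beta : \text{At}(E_H)/\theta'(\mathcal F) \to \text{ad}(E_G)$. Verification of the three axioms of a transversely branched Cartan geometry (partial-connection preservation, commutativity of the defining diagram, and isomorphy on a dense open set) is immediate from the corresponding properties of $\beta_M$; the branching divisor is precisely the locus where $df$ fails to be surjective.

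For the equivalence of flatnesses I would invoke the naturality of the holomorphic connection $\varphi$ on $E_G$ constructed in Section \ref{s3} from $((E_H, \theta), \beta)$: the whole construction is pulled back through $f$ from the analogous connection $\varphi_M$ on $P_G$ attached to the Cartan geometry on $M$, so ${\rm Curv}(\varphi) = f^*{\rm Curv}(\varphi_M)$. Since $f$ is submersive on a dense open subset, $\varphi$ is flat if and only if $\varphi_M$ is flat, as required. The only genuinely technical point is the extension of the foliation across the indeterminacy locus of $df$, which rests on the standard codimension estimate for the singular locus of a reflexive sheaf on a smooth complex manifold; every remaining step is a formal transcription of the argument preceding the statement.
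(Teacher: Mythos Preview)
Your proposal is correct and follows exactly the approach the paper intends: the paper's own proof is a single sentence instructing the reader to repeat the preceding $G/H$ construction with the Cartan geometry on $M$ pulled back through $f$, and that is precisely what you do, in considerably more detail. Your explicit treatment of the ``if and only if'' for flatness via ${\rm Curv}(\varphi) = f^*{\rm Curv}(\varphi_M)$ is a useful addition that the paper leaves implicit; the only minor imprecision is in the codimension bookkeeping for the foliation (reflexivity of the saturation alone gives local freeness in codimension three, but you also need the torsion-free quotient to be locally free, which only holds in codimension two---your final claim of codimension at least two is nonetheless correct).
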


\begin{proof} 
The proof is the same as above if one considers, instead of $\gamma^*F_H$ and 
$\gamma^*\beta_{G,H}$, the pull back of the Cartan geometry of $M$ through $f$. 
\end{proof}

\subsection{Transversely affine and transversely projective geometry}

Let us recall two standard models $G/H$ which are of particular interest: the 
complex affine and the complex projective geometries.

Consider the semi-direct product ${\mathbb C}^d\rtimes\text{GL}(d, {\mathbb C})$ for the standard 
action of $\text{GL}(d, {\mathbb C})$ on ${\mathbb C}^d$. This group ${\mathbb 
C}^d\rtimes\text{GL}(d, {\mathbb C})$ is identified with the group of all affine transformations 
of ${\mathbb C}^d$. Set $H\,=\, \text{GL}(d, {\mathbb C})$ and $G\,=\, {\mathbb 
C}^d\rtimes\text{GL}(d, {\mathbb C})$.

By definition, a given regular holomorphic foliation $\mathcal F$ of complex codimension 
$d$ admits a transversely (branched) {\it holomorphic affine connection} if it 
admits a transversely (branched) holomorphic Cartan geometry of type $G/H$. When 
the transversely Cartan geometry is flat, we say that $\mathcal F$ admits a 
transversely (branched) {\it complex affine geometry}.

We also recall that a holomorphic foliation $\mathcal F$ of complex codimension 
$d$ admits a transversely (branched) {\it holomorphic projective connection} if it 
admits a (branched) holomorphic Cartan geometry of type $\text{PGL}(d+1,{\mathbb 
C})/Q$, where $Q\, \subset\, \text{PGL}(d+1,{\mathbb C})$ is the maximal parabolic 
subgroup that fixes a given point for the standard action of 
$\text{PGL}(d+1,{\mathbb C})$ on ${\mathbb C}P^d$ (the space of lines in ${\mathbb 
C}^{d+1}$). If the transversely Cartan geometry is flat, we say that $\mathcal F$ 
admits a transversely (branched) {\it complex projective geometry}.

We have seen in Section \ref{fibration hom} that any holomorphic map $X 
\,\longrightarrow\, {\mathbb C}P^d$ which is a submersion on an open dense set 
gives rise to a holomorphic foliation with transversely branched complex 
projective geometry. Conversely, we have seen in Section \ref{developing} that on 
simply connected manifolds, any foliation with transversely branched complex 
projective geometry is given by a holomorphic map $X \,\longrightarrow\, {\mathbb 
C}P^d$ which is a submersion on an open dense set.

Consider now a complex manifold $X$ of algebraic dimension $a(X)=d$. Recall that 
the algebraic dimension is the degree of transcendence over $\mathbb C$ of the 
field ${\mathcal M}(X)$ of meromorphic functions on $X$. It is known that $a(X)$ 
is at most the complex dimension of $X$ with equality if and only if $X$ is 
birational to a complex projective manifold (see \cite{Ue}), known as Moishezon
manifolds.

\begin{proposition}
Suppose that $X$ is a compact complex manifold of algebraic dimension $a(X)\,=\,d$. 
Then, away from an analytic subset of positive codimension, $X$ admits a 
nonsingular holomorphic foliation of complex codimension $d$, endowed with a 
transversely branched complex projective geometry.
\end{proposition}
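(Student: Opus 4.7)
The plan is to manufacture a rational map from $X$ to $\mathbb{C}P^d$ out of the meromorphic functions on $X$, and then feed it into Proposition~\ref{fibration}. Since $a(X)=d$ is the transcendence degree of $\mathcal{M}(X)$ over $\mathbb{C}$, I can pick $d$ algebraically independent meromorphic functions $f_1,\ldots,f_d \in \mathcal{M}(X)$ and assemble them into a meromorphic map
$$
f\,=\,(1:f_1:\cdots:f_d)\,\colon\, X\dashrightarrow \mathbb{C}P^d.
$$

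Next I would verify two properties of $f$. First, the indeterminacy locus $I\subset X$ of $f$ is a closed analytic subset of codimension at least two: locally $f$ is represented by a $(d+1)$-tuple of holomorphic functions without common factor, and $I$ is cut out by their common vanishing, which for a non-constant $f$ is automatically of codimension $\geq 2$. Second, on $X\setminus I$ the differential $df$ is surjective on a nonempty open subset: algebraic independence of the $f_i$ implies that the image of $f$ is not contained in any proper algebraic subvariety of $\mathbb{C}P^d$, hence is Zariski dense, and generic smoothness then forces $df$ to have full rank $d$ on an open dense subset of $X\setminus I$.

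With the holomorphic map $f\colon X\setminus I\to \mathbb{C}P^d$ in hand satisfying these two properties, I would apply Proposition~\ref{fibration} with $M=\mathbb{C}P^d$ equipped with the standard flat Cartan geometry of type $(\text{PGL}(d+1,\mathbb{C}),\,Q)$ recalled in the previous subsection. The proposition yields a nonsingular holomorphic foliation $\mathcal{F}$ of complex codimension $d$ on an open subset $X'\subseteq X\setminus I$ whose complement in $X$ has codimension at least two, together with a transversely branched holomorphic Cartan geometry of type $(\text{PGL}(d+1,\mathbb{C}),\,Q)$; flatness of the model Cartan geometry on $\mathbb{C}P^d$ transfers, via the ``if and only if'' in Proposition~\ref{fibration}, to flatness of the transversely branched geometry on $X'$, which is exactly a transversely branched complex projective geometry in the terminology just introduced.

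The only nontrivial inputs are the codimension-two control on the indeterminacy set of a meromorphic map to projective space and the generic surjectivity of $df$; both are classical facts, and they are precisely what ensure that the locus $X\setminus X'$ on which the foliation ceases to be defined is an analytic subset of positive codimension, as the statement requires. Everything else is a direct citation of the machinery set up in Sections~\ref{s3} and \ref{fibration hom}.
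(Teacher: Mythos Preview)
Your proposal is correct and follows essentially the same strategy as the paper: produce a holomorphic map to $\mathbb{C}P^d$ (generically submersive) and invoke Proposition~\ref{fibration} with the flat model geometry on $\mathbb{C}P^d$. The only difference is cosmetic: the paper packages the construction of this map via the algebraic reduction theorem (passing to a bimeromorphic modification $\widehat{X}\to Y$ with $Y$ projective of dimension $d$, then composing with a finite map $Y\to\mathbb{C}P^d$), whereas you build the map by hand from $d$ algebraically independent meromorphic functions and excise the indeterminacy locus. Your route is slightly more elementary; the paper's route makes the generic submersivity automatic from surjectivity of the reduction map and finiteness of $\pi$, while in your version the phrase ``generic smoothness'' is doing the work of Remmert's proper mapping theorem plus Chow's theorem (to see the image is all of $\mathbb{C}P^d$) together with a Sard/rank argument---you might want to name those ingredients explicitly rather than invoke generic smoothness, which is strictly speaking an algebraic-geometry statement.
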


\begin{proof}
This is a direct application of the algebraic reduction theorem (see \cite{Ue}) which asserts that 
$X$ admits a modification $\widehat{X}$ such that there exists a holomorphic surjective map $f 
\,:\, \widehat{X}\,\longrightarrow\, Y$ to a compact complex projective manifold $Y$ of complex 
dimension $d$ such that $$f^* \,:\, {\mathcal M} (Y)\,\longrightarrow\,{\mathcal 
M}(\widehat{X})\,=\, {\mathcal M}(X)$$ is an isomorphism. Moreover, since $Y$ is projective,
there exists a finite algebraic map $\pi \,:\, Y \,\longrightarrow\, {\mathbb C}P^d$ (see a 
short proof of this classical fact in \cite{BD} Proposition 3.1). Hence we get a holomorphic 
surjective fibration $\pi \circ f \,:\, \widehat{X} \,\longrightarrow\, {\mathbb C}P^d$. Now 
Proposition \ref{fibration} applies.
\end{proof}

\section{A topological obstruction}\label{s4}

Let $X$ be a compact connected K\"ahler manifold of complex dimension $d$ equipped with a K\"ahler
form $\omega$. Chern classes will always mean ones with real coefficients. For a torsionfree
coherent analytic sheaf $V$ on $X$, define
\begin{equation}\label{deg}
\text{degree}(V)\,:=\, (c_1(V)\cup\omega^{d-1})\cap [X]\, \in\, {\mathbb R}\, .
\end{equation}
The degree of a divisor $D$ on $X$ is defined to be $\text{degree}({\mathcal O}_X(D))$.

Fix an effective divisor $D$ on $X$. Fix a holomorphic principal $H$--bundle $E_H$ 
on $X$.

\begin{proposition}\label{thm1}
Let $\mathcal F$ be a holomorphic nonsingular foliation on the 
K\"ahler manifold $X$. Assume that $\mathcal F$ admits a transversely branched Cartan geometry 
of type $G/H$ with principal $H$--bundle $E_H$ and branching divisor $D$. Then ${\rm degree}({\mathcal 
N}^*_{\mathcal F})-{\rm degree}(D)\, =\, {\rm degree}({\rm ad}(E_H))$.

In particular, if $D\, \not=\, 0$, then ${\rm degree}({\mathcal N}^*_{\mathcal F})\,>\, 
{\rm degree}({\rm ad}(E_H))$.
\end{proposition}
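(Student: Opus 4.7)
The proof plan breaks the degree identity into three ingredients and combines them.

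First, I would exploit the exact sequence \eqref{at3}
$$0 \,\longrightarrow\, \text{ad}(E_H) \,\longrightarrow\, \text{At}(E_H)/\theta'(\mathcal{F}) \,\longrightarrow\, \mathcal{N}_{\mathcal{F}} \,\longrightarrow\, 0$$
to compute the determinant line bundle of the middle term as the tensor product of the determinants of the two outer terms, giving
$$\text{degree}\bigl(\text{At}(E_H)/\theta'(\mathcal{F})\bigr) \,=\, \text{degree}(\text{ad}(E_H)) + \text{degree}(\mathcal{N}_{\mathcal{F}}).$$

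Next, I would use the definition of the branching divisor. Since both $\text{At}(E_H)/\theta'(\mathcal{F})$ and $\text{ad}(E_G)$ have rank $n = \dim\mathfrak{g}$, and $\bigwedge^n \beta$ is a section of the line bundle $\text{Hom}\bigl(\det(\text{At}(E_H)/\theta'(\mathcal{F})),\, \det(\text{ad}(E_G))\bigr)$ whose divisor equals $D$, we obtain
$$\det(\text{ad}(E_G)) \,=\, \det\bigl(\text{At}(E_H)/\theta'(\mathcal{F})\bigr)\otimes \mathcal{O}_X(D),$$
which yields $\text{degree}(\text{ad}(E_G)) = \text{degree}(\text{At}(E_H)/\theta'(\mathcal{F})) + \text{degree}(D)$.

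The third (and main) ingredient is to observe that the transversely branched Cartan geometry produces, through the construction in \eqref{vp}, a genuine holomorphic connection $\varphi$ on the principal $G$--bundle $E_G$. This connection induces a holomorphic connection on the associated vector bundle $\text{ad}(E_G)$. By Atiyah's theorem \cite{At}, a holomorphic vector bundle admitting a holomorphic connection has all its real Chern classes equal to zero; in particular $c_1(\text{ad}(E_G)) = 0$ in $H^2(X,\mathbb{R})$, so $\text{degree}(\text{ad}(E_G)) = 0$. Putting the three identities together gives exactly
$$\text{degree}(\mathcal{N}^*_{\mathcal{F}}) - \text{degree}(D) \,=\, \text{degree}(\text{ad}(E_H)).$$

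For the final assertion, if $D$ is a nonzero effective divisor on a compact K\"ahler manifold, then $\text{degree}(D) = \int_D \omega^{d-1} > 0$ by positivity of the K\"ahler form, so the stated strict inequality follows at once. The step I expect to be the most delicate is justifying the application of Atiyah's vanishing to $\text{ad}(E_G)$; once one notices that $\varphi$ was built without reference to $\theta$ being flat in transverse directions and gives a full (not merely partial) holomorphic connection on $E_G$, the rest is bookkeeping with the two short exact sequences.
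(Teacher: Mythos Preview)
Your proof is correct and follows essentially the same approach as the paper. The only cosmetic difference is that the paper reads off the branching divisor from the induced map $\overline{\beta}\,:\,\mathcal{N}_{\mathcal F}\to \text{ad}(E_G)/\text{ad}(E_H)$ (using the bottom row of \eqref{cg1}) rather than from $\beta$ itself with the top row, but the substance --- the degree bookkeeping combined with Atiyah's vanishing $c_1(\text{ad}(E_G))=0$ coming from the holomorphic connection $\varphi$ on $E_G$ --- is identical.
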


\begin{proof} Let $k$ be the complex dimension of the transverse model geometry $G/H$.

Recall that the homomorphism $\overline{\beta} \,:\, {\mathcal N}_{\mathcal 
F}\,\longrightarrow\, \text{ad}(E_G)/\text{ad}(E_H)$ in \eqref{cg1} is an 
isomorphism over a point $x\, \in\, X$ if and only if $\beta(x)$ is an isomorphism.

The branching divisor $D$ coincides with the vanishing divisor of the holomorphic section $\bigwedge^k \overline{ \beta}$ of the holomorphic line bundle $\bigwedge^k ({\mathcal N}^*_{\mathcal F}) \otimes \bigwedge^k ({\rm ad}(E_G)/{\rm ad}(E_H))$. We have
$$
\text{degree}(D)\,=\, \text{degree}(\bigwedge\nolimits^k ({\rm ad}(E_G)/{\rm ad}(E_H))
\otimes \bigwedge^k ({\mathcal N}^*_{\mathcal F}) )
$$
\begin{equation}\label{f2}
=\, \text{degree}({\rm ad}(E_G)) - \text{degree}({\rm ad}(E_H))
+ {\rm degree}({\mathcal N}^*_{\mathcal F})\, .
\end{equation}
Recall that $E_G$ has a holomorphic connection $\phi$ (see \eqref{vp}).
It induces a holomorphic connection on $\text{ad}(E_G)$. Hence we have
$c_1({\rm ad}(E_G)) \,=\, 0$ \cite[Theorem~4]{At}, which implies that
$\text{degree}({\rm ad}(E_G)) \,=\, 0$. Therefore, from \eqref{f2} it follows that
\begin{equation}\label{e7}
{\rm degree}({\mathcal N}^*_{\mathcal F})-{\rm degree}(D)\, =\, {\rm degree}({\rm ad}(E_H))\, .
\end{equation}

If $D\,\not=\, 0$, then $\text{degree}(D)\, >\, 0$. Hence in that case \eqref{e7} yields
${\rm degree}({\mathcal N}^*_{\mathcal F} )\, >\, {\rm degree}({\rm ad}(E_H))$.
\end{proof}

\begin{corollary}\label{corollaire deg}\mbox{}
\begin{enumerate}
\item[(i)] If ${\rm degree}({\mathcal N}^*_{\mathcal F})\, <\, 0$, then there is no
branched transversely holomorphic affine connection on $X$ transversal to ${\mathcal F}$.

\item[(ii)] If ${\rm degree}({\mathcal N}^*_{\mathcal F})\, =\, 0$, then for every branched
transversely holomorphic affine connection on $X$ transversal to $\mathcal F$
the branching divisor on $X$ is trivial.
\end{enumerate}
\end{corollary}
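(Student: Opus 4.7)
The plan is to reduce both parts of the corollary to Proposition \ref{thm1} applied to the specific affine model $G = \mathbb{C}^d \rtimes \mathrm{GL}(d,\mathbb{C})$, $H = \mathrm{GL}(d,\mathbb{C})$ introduced earlier. The decisive input is a Chern class computation: for this choice of $H$, the principal $H$-bundle $E_H$ is a frame bundle for a rank-$d$ holomorphic vector bundle $V = E_H \times^H \mathbb{C}^d$, and the adjoint representation of $\mathrm{GL}(d,\mathbb{C})$ on its Lie algebra $\mathfrak{gl}(d,\mathbb{C})$ is exactly conjugation on matrices, so $\mathrm{ad}(E_H) = \mathrm{End}(V) \cong V \otimes V^*$.

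First I would compute $c_1(\mathrm{ad}(E_H))$. Using $c_1(V \otimes V^*) = d\cdot c_1(V) + d\cdot c_1(V^*) = d\cdot c_1(V) - d\cdot c_1(V) = 0$, I get $\mathrm{degree}(\mathrm{ad}(E_H)) = 0$. Substituting this into equation \eqref{e7} of Proposition \ref{thm1} gives the clean identity
\begin{equation*}
\mathrm{degree}(\mathcal{N}^*_{\mathcal F}) \,=\, \mathrm{degree}(D),
\end{equation*}
valid for every branched transversely holomorphic affine connection on $X$ transversal to $\mathcal F$.

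Next I would deduce (i) and (ii) from this identity together with the standard positivity property: if $D$ is a nonzero effective divisor on the compact Kähler manifold $(X,\omega)$, then $\mathrm{degree}(D) > 0$, since each irreducible component $D_i$ contributes a positive term $\int_{D_i} \omega^{d-1} > 0$. For (i), a hypothetical branched transversely affine connection would force $\mathrm{degree}(\mathcal{N}^*_{\mathcal F}) = \mathrm{degree}(D) \ge 0$, contradicting the assumption $\mathrm{degree}(\mathcal{N}^*_{\mathcal F}) < 0$. For (ii), the identity yields $\mathrm{degree}(D) = 0$, and by effectiveness plus the positivity property, $D = 0$.

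There is essentially no obstacle: the entire argument is a specialization of the preceding proposition, and the only nontrivial ingredient is the observation that the adjoint bundle of a $\mathrm{GL}(d,\mathbb{C})$-bundle has vanishing first Chern class. The only minor care needed is in invoking that effective divisors of vanishing Kähler degree vanish, which is immediate from $\omega^{d-1}$ being a strictly positive $(d-1,d-1)$-form on each irreducible component.
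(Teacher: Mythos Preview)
Your proof is correct and follows essentially the same route as the paper: both reduce to Proposition~\ref{thm1} by showing $\mathrm{degree}(\mathrm{ad}(E_H))=0$ for $H=\mathrm{GL}(d,\mathbb{C})$, then invoke positivity of the degree of a nonzero effective divisor. The only cosmetic difference is that the paper obtains $\mathrm{degree}(\mathrm{ad}(E_H))=0$ via the self-duality $\mathrm{ad}(E_H)\cong\mathrm{ad}(E_H)^*$ coming from the trace pairing on $\mathfrak{gl}(d,\mathbb{C})$, whereas you compute $c_1(V\otimes V^*)=0$ directly.
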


\begin{proof}
Recall that a transversely branched holomorphic affine connection on $X$ transversal
to $\mathcal F$ is a transversely branched holomorphic Cartan geometry on $X$ of type $G/H$,
where $H\,=\,
\text{GL}(d, {\mathbb C})$ and $G\,=\, {\mathbb C}^d\rtimes\text{GL}(d, {\mathbb C})$.
The homomorphism $$\text{M}(d, {\mathbb C})\otimes
\text{M}(d, {\mathbb C})\, \longrightarrow\, \mathbb C\, ,\ \ A\otimes B\, \longmapsto\,
\text{trace}(AB)$$ is nondegenerate and $\text{GL}(d, {\mathbb C})$--invariant. In other words, 
the Lie algebra $\mathfrak h$ of $H\,=\, \text{GL}(d, {\mathbb C})$
is self-dual as an $H$--module. Hence we have $\text{ad}(E_H)\,=\, \text{ad}(E_H)^*$, in particular,
the equality $$\text{degree}(\text{ad}(E_H))\,=\,0$$ holds. Hence from
Proposition \ref{thm1},
\begin{equation}\label{prc}
{\rm degree}({\mathcal N}^*_{\mathcal F})\,=\, {\rm degree}(D)\, .
\end{equation}

As noted before, for a nonzero effective divisor $D$ we have $\text{degree}(D)\, >\, 0$.
Therefore, the corollary follows from \eqref{prc}.
\end{proof}

\section{Flatness of the transverse geometry on some special varieties}\label{special varieties}

In this section we consider holomorphic foliations $\mathcal F$ on projective 
rationally connected manifolds and on simply connected Calabi-Yau manifolds 
$\widehat X$. In both cases, we show that the only transversely branched 
holomorphic Cartan geometries, on the open dense set $X$ of $\widehat X$ where the 
foliation is nonsingular, are necessarily flat and come from a holomorphic map 
into a homogeneous space (as described in Section \ref{fibration hom}).

\subsection{Rationally connected varieties}

Let $\widehat X$ be a smooth complex projective rationally connected variety. Let $X\, \subset\,
\widehat{X}$ be a Zariski open subset such that the complex codimension of the complement
$\widehat{X}\setminus X$ is at least two. Take a nonsingular foliation
$$
{\mathcal F}\, \subset\, TX
$$
on $X$. Let $((E_H,\, \theta),\, \beta)$ be a transversely branched holomorphic Cartan geometry of
type $(G,\, H)$ on the foliated manifold $(X,\, {\mathcal F})$.

There is a nonempty open subset of $X$ which can be covered by smooth complete 
rational curves $C$ such that the restriction $(TX)\vert_C$ is ample. On a curve 
any holomorphic connection is flat. Further, since a rational curve is simply 
connected, any holomorphic bundle on it equipped with a holomorphic connection is 
isomorphic to the trivial bundle equipped with the trivial connection. If 
$(TX)\vert_C$ is ample, then $H^0(C,\, (\Omega^2_X)\vert_C) \,=\, 0$. Therefore, 
any holomorphic bundle on $X$ with a holomorphic connection has the property that 
the curvature vanishes identically. In particular, the transversely branched 
holomorphic Cartan geometry $((E_H,\, \theta),\, \beta)$ must be flat. Further, $X$ 
is simply connected because $\widehat X$ is so. Therefore, the transversely 
branched holomorphic Cartan geometry $((E_H,\, \theta),\, \beta)$ is the pullback, 
of the standard Cartan geometry on $G/H$ of type $(G,\, H)$, by a developing map 
$f\, :\, X\, \longrightarrow\, G/H$. The foliation is given by 
$\text{kernel}(df)$.

This yields the following:

\begin{corollary}\label{corflat}
Let $\widehat X$ be a smooth complex projective 
rationally connected variety, and let $\mathcal F$ be a holomorphic nonsingular 
foliation of positive codimension defined on a Zariski open subset $X$ of complex 
codimension at least two in $\widehat{X}$. Then there is no transversely branched 
Cartan geometry, with model a nontrivial analytic affine variety $G/H$, on $X$ 
transversal to $\mathcal F$. In particular, there is transversely holomorphic 
affine connection on $X$ transversal to $\mathcal F$.
\end{corollary}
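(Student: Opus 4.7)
The plan is to reuse the analysis preceding the corollary to reduce the problem to a single statement about holomorphic maps, and then combine two classical facts: (i) Hartogs-type extension across an analytic subset of complex codimension at least two into an affine target, and (ii) every holomorphic map from a rationally connected projective variety to an affine variety is constant.

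First, I invoke the discussion just above the corollary: since $\widehat{X}$ is rationally connected, it is simply connected, and since $\widehat{X}\setminus X$ has complex codimension at least two, the open set $X$ is simply connected as well. The discussion already shows that every transversely branched holomorphic Cartan geometry $((E_H,\,\theta),\,\beta)$ on $(X,\,\mathcal{F})$ is flat and arises from a developing map $f\,:\,X\,\longrightarrow\,G/H$, with $df$ equal to the homomorphism $\overline{\beta}$ of \eqref{cg1}. Because $\mathcal{F}$ has positive codimension and $\beta$ is an isomorphism on a nonempty open set, $\overline{\beta}$ (hence $df$) is surjective onto $T(G/H)$ on a nonempty open subset of $X$; in particular $f$ is nonconstant.

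Second, I would extend $f$ holomorphically to all of $\widehat{X}$. Choose a closed analytic embedding $G/H\,\hookrightarrow\,\mathbb{C}^N$, which exists by the hypothesis that $G/H$ is affine. The $N$ coordinate functions of $f$ are holomorphic functions on $X$, and since $\widehat{X}$ is smooth and $\widehat{X}\setminus X$ has complex codimension at least two, each of them extends by the classical Hartogs phenomenon to a holomorphic function on $\widehat{X}$. The resulting map $\widehat{f}\,:\,\widehat{X}\,\longrightarrow\,\mathbb{C}^N$ agrees with $f$ on the dense open $X$, whose image lies inside the closed subvariety $G/H$, so by continuity $\widehat{f}$ factors through a holomorphic map $\widehat{f}\,:\,\widehat{X}\,\longrightarrow\,G/H$.

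Finally, I use rational connectedness: any two points of $\widehat{X}$ are joined by a chain of smooth rational curves $C\,\cong\,\mathbb{C}P^1$, and the restriction of $\widehat{f}$ to any such $C$ is a holomorphic map from a compact curve to the affine variety $G/H$, hence constant. Therefore $\widehat{f}$ is globally constant, contradicting the non-constancy of $f$ already established. Hence no such branched Cartan geometry can exist. The ``in particular'' assertion follows by taking $G\,=\,\mathbb{C}^d\rtimes \mathrm{GL}(d,\mathbb{C})$ and $H\,=\,\mathrm{GL}(d,\mathbb{C})$, so that $G/H\,=\,\mathbb{C}^d$ is affine (and nontrivial because $\mathcal{F}$ has positive codimension). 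The most delicate step is verifying that the Hartogs extension of the coordinate functions indeed lands inside the closed analytic subvariety $G/H\,\subset\,\mathbb{C}^N$, but this follows at once from continuity and the density of $X$ in $\widehat{X}$; the remaining ingredients are either imported verbatim from the preceding paragraph or are standard facts about rationally connected varieties.
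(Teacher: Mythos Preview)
Your proof is correct and follows essentially the same route as the paper: reduce to the developing map via the preceding discussion, extend it across $\widehat{X}\setminus X$ by Hartogs using the affine embedding of $G/H$, and derive a contradiction from constancy of $\widehat{f}$. The only difference is cosmetic: for constancy the paper simply invokes compactness of $\widehat{X}$ (holomorphic functions on a compact complex manifold are constant, so any holomorphic map to an affine variety is constant), whereas you route through chains of rational curves---valid, but more than is needed.
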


\begin{proof}
Assume, by contradiction, that there is a transversely 
branched Cartan geometry on $X$ whose model is an analytic affine variety $G/H$. By
the above observations, the branched Cartan geometry is necessarily flat and is
given by a holomorphic developing map $f\, :\, X\, \longrightarrow\, G/H$. Since the
target $G/H$ is an affine analytic variety, Hartog's theorem says that $f$ extends to a 
holomorphic map $\widehat{f}\, :\, \widehat{X}\, \longrightarrow\, G/H$. Now, as 
$\widehat{X}$ is compact, and $G/H$ is affine, $\widehat{f}$ must be constant: a 
contradiction; indeed, as for $f$, the differential of $\widehat{f}$ is injective
on $TX/{\mathcal F}$ at a general point of $X$.
\end{proof}

Notice that if $G$ is a complex linear algebraic group and $H$ a closed reductive 
algebraic subgroup, then $G/H$ is an affine analytic variety (see Lemma 3.32 in 
\cite{Mc}).

\subsection{Simply connected Calabi--Yau manifolds}

Let $\widehat X$ be a simply connected compact K\"ahler manifold with $c_1(\widehat{X})\,=\, 0$. 
As before, $X\, \subset\, \widehat{X}$ is a dense open subset such that the complement 
$\widehat{X}\setminus X$ is a complex analytic subset of complex codimension at least two.
Take a nonsingular foliation
$$
{\mathcal F}\, \subset\, TX
$$
on $X$. Take a complex Lie group $G$ such that there is a holomorphic homomorphism
$G\, \longrightarrow\, \text{GL}(n,{\mathbb C})$ with the property that the corresponding
homomorphism of Lie algebras is injective.

Let $((E_H,\, \theta),\, \beta)$ be a transversely branched holomorphic Cartan geometry of type 
$(G,\, H)$ on the foliated manifold $(X,\, {\mathcal F})$. Consider the holomorphic connection 
$\varphi$ on $E_G$ over $X$ (see \eqref{vp}). The principal $G$--bundle $E_G$ extends to a 
holomorphic principal $G$--bundle $\widehat{E}_G$ over $\widehat X$, and the connection 
$\varphi$ extends to a holomorphic connection $\widehat\varphi$ on $\widehat{E}_G$ \cite[Theorem 
1.1]{Bi}. We know that $\widehat{E}_G$ is the trivial holomorphic principal $G$--bundle, and 
$\widehat\varphi$ is the trivial connection \cite[Theorem 6.2]{BD}. Also, $X$ is simply 
connected because $\widehat X$ is so. Therefore, the transversely branched holomorphic Cartan 
geometry $((E_H,\, \theta),\, \beta)$ is the pullback, of the standard Cartan geometry on $G/H$ of 
type $(G,\, H)$, by a developing map $f\, :\, X\, \longrightarrow\, G/H$. The foliation is given 
by $\text{kernel}(df)$.

As before we have the following:

\begin{corollary}
Let $\widehat X$ be a simply connected Calabi-Yau manifold,
and let $\mathcal F$ be a holomorphic nonsingular
foliation of positive codimension defined on a Zariski open subset $X$ of complex
codimension at least two in $\widehat{X}$. Then there is no transversely branched
Cartan geometry, with model a nontrivial analytic affine variety $G/H$, on $X$
transversal to $\mathcal F$. In particular, there is transversely holomorphic
affine connection on $X$ transversal to $\mathcal F$.
\end{corollary}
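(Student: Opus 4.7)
The plan is to mirror the proof of Corollary \ref{corflat}, simply replacing the rationally connected ingredients by the connection-extension and triviality results available in the simply connected Calabi--Yau setting. First I would argue by contradiction and suppose that $(X,\mathcal F)$ admits a transversely branched holomorphic Cartan geometry $((E_H,\theta),\beta)$ of type $(G,H)$ with $G/H$ a nontrivial analytic affine variety. Since $G/H$ is affine, $G$ is in particular a complex linear group and hence satisfies the representation hypothesis of the paragraph preceding the corollary (a holomorphic homomorphism $G\to \mathrm{GL}(n,\mathbb C)$ with injective differential on Lie algebras).

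Next I would invoke that preceding discussion verbatim. The holomorphic connection $\varphi$ on $E_G$, constructed in \eqref{vp}, extends through the codimension-at-least-two set $\widehat X\setminus X$ to a holomorphic connection $\widehat\varphi$ on an extension $\widehat E_G$ of $E_G$ by \cite[Theorem 1.1]{Bi}, and this extension is identified with the trivial $G$--bundle equipped with its trivial connection by \cite[Theorem 6.2]{BD}. Combined with the simple-connectedness of $X$, which follows from that of $\widehat X$ because the removed locus has complex codimension at least two, the developing map construction of Section \ref{developing} produces a holomorphic map $f\,:\,X\,\longrightarrow\,G/H$ whose differential is surjective on a dense open subset of $X$, and with respect to which the branched Cartan geometry is the pullback of the standard flat Cartan geometry on $G/H$.

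To close the argument I would now use the affineness of the model. Choose a closed holomorphic embedding $G/H\hookrightarrow \mathbb C^N$ and extend $f$ component by component, via Hartogs' theorem, across the codimension-at-least-two complement $\widehat X\setminus X$ to a holomorphic map $\widehat f\,:\,\widehat X\,\longrightarrow\,G/H$. Since $\widehat X$ is compact Kähler and $G/H$ is Stein, $\widehat f$ must be constant, contradicting the fact that $df$ is surjective on a dense open subset of $X$. This gives the first assertion, and the second assertion on the nonexistence of transversely holomorphic (branched) affine connections is immediate since the affine model $\mathbb C^d\cong(\mathbb C^d\rtimes \mathrm{GL}(d,\mathbb C))/\mathrm{GL}(d,\mathbb C)$ is itself an analytic affine variety.

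The main obstacle is conceptual rather than technical: it lies in the invocation of the extension-and-triviality package for holomorphic connections on simply connected compact K\"ahler manifolds with vanishing first Chern class. Once the cited theorems of \cite{Bi} and \cite{BD} are granted, the remaining steps are a routine parallel of the proof of Corollary \ref{corflat}; the only point needing a brief verification is that a closed embedding of the affine analytic variety $G/H$ into some $\mathbb C^N$ is indeed enough for the Hartogs extension to land in $G/H$ rather than merely in $\mathbb C^N$, which follows from closedness of the image.
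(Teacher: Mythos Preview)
Your proposal is correct and follows exactly the paper's approach: the paper's proof is literally the sentence ``Its proof is identical to that of Corollary \ref{corflat},'' relying on the preceding paragraph (extension via \cite[Theorem 1.1]{Bi}, triviality via \cite[Theorem 6.2]{BD}, simple connectedness of $X$, developing map) in place of the rational-curve argument, and then the Hartogs-plus-compactness contradiction carries over verbatim. One small caveat: the hypothesis that $G$ admits a holomorphic homomorphism to $\mathrm{GL}(n,\mathbb C)$ with injective differential on Lie algebras is a \emph{standing assumption} of the Calabi--Yau subsection, not a consequence of $G/H$ being an affine analytic variety, so you should invoke it as such rather than attempt to deduce it from affineness of the model.
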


Its proof is identical to that of Corollary \ref{corflat}.


\end{document}